\documentclass{amsart}

\title[Degenerate Pullback Attractors for the 3D NSE]
      {Degenerate Pullback Attractors for the 3D Navier-Stokes Equations}
\author{Alexey Cheskidov}
    \address{Department of Mathematics, Statistics, and Computer Science\\
        University of Illinois at Chicago\\
        322 Science and Engineering Offices (M/C 249)\\
        851 S. Morgan Street\\
        Chicago, Illinois 60607-7045 USA}
    \email{acheskid@uic.edu}
\author{Landon Kavlie}
    \address{Department of Mathematics, Statistics, and Computer Science\\
        University of Illinois at Chicago\\
        322 Science and Engineering Offices (M/C 249)\\
        851 S. Morgan Street\\
        Chicago, Illinois 60607-7045 USA}
    \email{lkavli2@uic.edu}
\thanks{The authors were partially supported by NSF Grant DMS-1108864.}

\usepackage{amsthm}
\usepackage{amsfonts}
\usepackage{amsmath}
\usepackage{mathrsfs}
\usepackage{graphicx}
\usepackage[colorlinks=true]{hyperref}
\hypersetup{urlcolor=blue, citecolor=red}

\newcommand{\ddx}{\mathrm{dx}}
\newcommand{\ddxi}{\mathrm{d}\xi}
\newcommand{\derivt}{\mathrm{\frac{d}{dt}}}

\newcommand{\ds}{\mathrm{d_s}}
\newcommand{\dw}{\mathrm{d_w}}
\newcommand{\dd}{\mathrm{d_\bullet}}
\newcommand{\ee}{\epsilon}

\newcommand{\WWd}{\Omega_\bullet}
\newcommand{\WWs}{\Omega_\mathrm{s}}
\newcommand{\WWw}{\Omega_\mathrm{w}}

\newcommand{\N}{\mathbb{N}}
\newcommand{\Z}{\mathbb{Z}}
\newcommand{\R}{\mathbb{R}}

\newcommand{\A}{\mathscr{A}}

\newcommand{\AAw}{\mathscr{A}_\mathrm{w}}
\newcommand{\AAd}{\mathscr{A}_\bullet}

\newcommand{\Xw}{X_\mathrm{w}}
\newcommand{\Xd}{X_\bullet}

\newcommand{\BB}{\mathscr{B}}
\newcommand{\EE}{\mathscr{E}}
\newcommand{\FF}{\mathscr{F}}

\newcommand{\PP}{\mathscr{P}}
\newcommand{\TT}{\mathscr{T}}

\newcommand{\abs}[1]{\lvert#1\rvert}
\newcommand{\norm}[1]{\lVert#1\rVert}
\newcommand{\ip}[2]{\left(#1,#2\right)}
\newcommand{\IP}[2]{\left(\left(#1,#2\right)\right)}
\newcommand{\angip}[2]{\langle#1,#2\rangle}

\newcommand{\grad}{\nabla}


\newcommand{\numberthis}{\addtocounter{equation}{1}\tag{\theequation}}

\newcounter{icount}
\newcommand{\refitem}[1]{\item[#1]\refstepcounter{icount}\label{#1}}
\newcommand{\iref}[1]{\hyperref[#1]{#1}}

\begin{document}

\newtheorem{thm}{Theorem}[section]
\newtheorem{prop}[thm]{Proposition}
\newtheorem{lem}[thm]{Lemma}
\newtheorem{cor}[thm]{Corollary}
\newtheorem{define}[thm]{Definition}
\newtheorem{rmk}[thm]{Remark}

\begin{abstract}
As was found in \cite{CK13}, the 3D Navier-Stokes equations with a translationally bounded force possesses pullback attractors $\AAw(t)$ in a weak sense. Moreover, those attractors consist of complete bounded trajectories. In this paper, we present a sufficient condition under which the pullback attractors are degenerate. That is, if the Grashof number is small enough, each section of the pullback attractor is a single point on a unique, complete, bounded, strong solution. We then apply our results to provide a new proof of the existence of a unique, strong, periodic solution to the 3D Navier-Stokes with a small, periodic forcing term.
\end{abstract}

\maketitle

\section{Introduction}

A natural question in the study of attractors for dissipative partial differential equations is what conditions on the force necessitate a trivial attractor. That is, under what conditions on the force do we find that the attractor $\A=\{z\}$, a single point. This is closely related to the question of dimensionality of the attractor. For the Navier-Stokes equations, it has long been known that they possess a compact global attractor in two dimensions (\cite{FP67}). The dimension of this global attractor is controlled by the Grashof number $G=\frac{\norm{f}_2^2}{\nu^2\lambda_1}$ (\cite{CF85}, \cite{CFT85}). In particular, when the Grashof number is small enough, the attractor is trivial. For a proof of this fact, see the book \cite{CV02}, although the argument used goes back to \cite{L63}. That is, $\A=\{z\}$ where $z$ is the unique stationary solution to the Stokes system. An analogous result was proven by Chepyzhov and Vishik using trajectory attractors in three dimensions where the Grashof number is given by $G=\frac{\norm{f}_2^2}{\nu^2\lambda_1^{3/4}}$ (\cite{CV02}). This result can easily be extended to the theory of weak attractors as developed in \cite{FT87}, \cite{CF06}, \cite{C09}.

In the nonautonomous Navier-Stokes equations, we have that $f=f(t)$ depends on time. In this situation, we consider the pullback attractor for the system. That is, a family of minimal closed sets $\A(t)$ which uniformly attract all bounded subsets of the phase space in a pullback sense. We will rigorously define these concepts in Section~\ref{setup}, below. For more information on the existence and structure of the pullback attractor in two dimensions, we refer the reader to the book \cite{CLR13}. For the existence and structure of pullback attractors for the three dimensional case, we refer the reader to \cite{CK13}. Now, in the book by Carvalho, Langa, and Robinson \cite{CLR13}, they produce a theorem giving sufficient conditions under which the pullback attractor $\A(t)$ for the 2D Navier-Stokes equations is a single point. They find that if some form of the Grashof number is small enough, then the pullback attractor is degenerate. That is, if 
\begin{equation*}
G(t):=\frac{1}{\nu^2\lambda_1}\left(\limsup_{s\rightarrow-\infty}\frac{1}{t-s}
            \int_s^t\norm{f(\xi)}_2^2\ddxi\right)^{1/2}
\end{equation*}
is small enough, then the pullback attractor $\A(t)$ is trivial. We present an analogous result for the 3D Navier-Stokes equations assuming a translationally bounded force in $L^2_{loc}(\R,L^2)$. We show that if a form of the Grashof number is small enough, then the weak pullback attractor $\AAw(t)=\{v(t)\}$ for a complete, bounded solution $v$. 

The paper is laid out as follows: Section~\ref{background} is devoted to recalling the basic definitions and setup of our problem. We recall the definition of generalized evolutionary systems and pullback attractors as they first appeared in \cite{CK13}. We then recall the major theorems of existence and structure for the 3D Navier-Stokes equations. Section~\ref{trivial} is then devoted to our proof of the triviality of the pullback attractors under the assumption of small enough Grashof number. We start by proving the strongness of the trajectories on the pullback attractor in Section~\ref{strong}. Then, in Section~\ref{serrin}, we use a modification of Serrin's argument on the uniqueness of strong solutions in intervals of regularity (\cite{S62}) to prove that the pullback attractor must be a single point under the usual smallness assumption of the force. 

We give a particularly interesting application of our results in Section~\ref{periodic}. Here, we apply the theorem giving us a unique, bounded, strong solution to the case where the force is periodic. In this setting, we show that this solution is periodic. Theorems proving the existence of periodic solutions to the 3D Navier-Stokes equations go back to Serrin (\cite{S59}). Additional results of this type are given in \cite{J60}, \cite{P60}, \cite{GS04} among others. For a more exhaustive discussion of the history of these results, see the recent paper by Kyed (\cite{K13}). A common technique in the existence of periodic solutions is the use of Poincar\'{e} maps and fixed-point arguments. Instead, we use the structure of the pullback attractor to prove the existence of a periodic solution.

In all that follows, we use the usual convections of $c_0$, $c_1$, $\dots$ for particular (fixed) constants. On the other hand, the constant $C$ will change from line to line.

\section{Generalized Evolutionary Systems}
\label{background}
\subsection{Setup and Previous Results}
\label{setup}

We begin with the setup and definition of a generalized evolutionary system as it appeared in \cite{CK13}. So, let $(X,\ds(\cdot,\cdot))$ be a metric space with a metric $\ds$ known as the strong metric on $X$. Let $\dw$ be another metric on $X$ satisfying the following conditions:
\begin{enumerate}
\item $X$ is $\dw$ compact. 
\item If $\ds(u_n,v_n)\rightarrow0$ as $n\rightarrow\infty$ for some $u_n,v_n
\in X$ then $\dw(u_n,v_n)\rightarrow0$ as $n\rightarrow\infty$.
\end{enumerate}
As justified by property (2), $\dw$ is called the weak metric on $X$. For simplicity, denote by $\Xd$ the set $X$ with the topology induced by the metric $\dd$. Next, denote by $\overline{A}^\bullet$ the closure of the set $A\subseteq X$ in the topology generated by $\dd$. Note that any strongly compact set ($\ds$-compact) is also weakly compact ($\dw$-compact), and any weakly closed set ($\dw$-closed) is also strongly closed ($\ds$-closed).

Let $C([a,b];X_\bullet)$, where $\bullet=$ s or w, be the space of $\dd$-continuous $X$-valued functions on $[a,b]$ endowed with the metric
\begin{equation*}
\mathrm{d}_{C([a,b];X_\bullet)}(u,v):=\sup_{t\in[a,b]}\dd(u(t),v(t)).
\end{equation*}
Let also $C([a,\infty);X_\bullet)$ be the space of all $\dd$-continuous $X$-valued functions on $[a,\infty)$ endowed with the metric
\begin{equation*}
\mathrm{d}_{C([a,\infty);X_\bullet)}(u,v):=\sum_{n\in\N}\frac{1}{2^n}
 \frac{\sup\{\dd(u(t),v(t)):a\le t\le a+n\}}{1+\sup\{\dd(u(t),v(t)):a\le t
    \le a+n\}}.
\end{equation*}

Let 
\begin{equation*}
\TT:=
  \{I\subset\R:I=[T,\infty)\mathrm{~for~some~}T\in\R\}\cup\{(-\infty,\infty)\},
\end{equation*}
and for each $I\in\TT$, let $\FF(I)$ denote the set of all $X$-valued functions on $I$.

\begin{define}
\label{GES}
A map $\EE$ that associates to each $I\in\TT$ a subset $\EE(I)\subset\FF(I)$ will be called a generalized evolutionary system if the following conditions are satisfied:
\begin{enumerate}
\item $\EE([s,\infty))\ne\emptyset$ for each $s\in\R$.
\item $\{u(\cdot)|_I:u(\cdot)\in\EE(J)\}\subseteq\EE(I)$ for each $I,J\in\TT$ 
with $I\subseteq J$.
\item $\EE((-\infty,\infty))=\{u(\cdot):u(\cdot)|_{[s,\infty)}\in
\EE([s,\infty))~\forall T\in\R\}$.
\end{enumerate}
\end{define}
We refer to $\EE(I)$ as the set of all trajectories on the time interval $I$. Trajectories in $\EE((-\infty,\infty))$ are called complete. Next, for each $t\ge s\in\R$ and $A\subseteq X$, define the map 
\begin{equation*}
P(t,s):\PP(X)\rightarrow\PP(X),
\end{equation*}
\begin{equation*}
P(t,s)A:=\{u(t):u(s)\in A, u\in\EE([s,\infty))\}.
\end{equation*}
This map has the property that, for each $t\ge s\ge r\in\R$ and $A\subseteq X$,
\begin{equation*}
P(t,r)A\subseteq P(t,s)P(s,r)A.
\end{equation*}

We will also add the following assumption which is satisfied by the 3D Navier-Stokes equations (\cite{CK13}).
\begin{enumerate}
\refitem{A1}$\EE([s,\infty))$ is compact in $C([s,\infty);\Xw)$ for each $s\in\R$.
\end{enumerate}

\begin{define}\cite{CK13}
A family of sets $\AAd(t)\subseteq X$ ($t\in\R$) is a $\dd$-pullback attractor ($\bullet=$ s or w) if $\AAd(t)$ is a minimal set which is
\begin{enumerate}
\item $\dd$-closed.
\item $\dd$-pullback attracting: for any $B\subseteq X$ and any $\ee>0$, there exists $s_0:=s_0(B,\ee)<t\in\R$ so that for $s\le s_0$, 
\begin{equation*}
P(t,s)B\subseteq B_\bullet(\AAd(t),\ee):=\{u:\inf_{x\in\AAd(t)}\dd(u,x)<\ee\}.
\end{equation*}
\end{enumerate}
\end{define}

\begin{define}\cite{CK13}
The pullback omega limit set $\WWd$ ($\bullet =$ s or w) of a set $A\subseteq X$ is a family of sets given by 
\begin{equation*}
\WWd(A,t):=\bigcap_{s\le t}\overline{\bigcup_{r\le s}P(t,r)A}^\bullet.
\end{equation*}
\end{define}

Equivalently, $x\in \WWd(A,t)$ if there exist sequences $s_n\rightarrow-\infty$, $s_n\le t$, and $x_n\in P(t,s_n)A$, such that $x_n\xrightarrow{\dd}x$ as $n\rightarrow\infty$. 

Next, we recall the definition of the notion of invariance for a generalized evolutionary system. This requires the following mapping for $A\subseteq X$ and $s\le t\in\R$:
\begin{equation*}
\tilde{P}(t,s)A:=\{u(t):u(s)\in A, u\in\EE((-\infty,\infty))\}.
\end{equation*}

\begin{define}\cite{CK13}
A family of sets $\BB(t)\subseteq X$ is pullback semi-invariant if for each $s\le\in\R$,
\begin{equation*}
\tilde{P}(t,s)\BB(s)\subseteq \BB(t).
\end{equation*}
We say that $\BB(t)$ is pullback invariant if for each $s\le t\in\R$,
\begin{equation*}
\tilde{P}(t,s)\BB(s)=\BB(t).
\end{equation*}
$\BB(t)$ is pullback quasi-invariant if for each $b\in\BB(t)$, there exists a complete trajectory $u\in\EE((-\infty,\infty))$ with $u(t)=b$ and $u(s)\in \BB(s)$ for all $s\le t\in\R$.
\end{define}

\begin{thm}\cite{CK13}
\label{pullbackresults}
Let $\EE$ be a generalized evolutionary system. Then,
\begin{enumerate}
\item If the $\dd$-pullback attractor $\AAd(t)$ exists, then $\AAd(t)=\WWd(X,t)$.
\item The weak pullback attractor $\AAw(t)$ exists and is nonempty.
\end{enumerate}
Furthermore, if $\EE$ satisfies $\iref{A1}$, then
\begin{enumerate}
\addtocounter{enumi}{2}
\item $\AAw(t)=\WWw(X,t)=\WWs(X,t)=\{u(t):u\in\EE((-\infty,\infty))\}$.
\item $\AAw(t)$ is the maximal pullback invariant and maximal pullback quasi-invariant set.
\item (Weak pullback tracking property) Let $\ee > 0$ and $t \in \R$. There exists an $s_0 := s_0(\ee,t) \le t$ so that for all $s < s_0$ and any $u \in \EE([s,\infty))$ there exists a complete trajectory $v \in \EE((-\infty,\infty))$ with $\mathrm{d}_{C([s,\infty);\Xw)}(u,v) < \ee$.
\end{enumerate}
\end{thm}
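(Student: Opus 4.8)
The plan is to establish the five assertions in sequence, letting the weak compactness of $X$ drive the two existence claims and the compactness assumption \iref{A1} drive the structural ones. Throughout I would exploit that a $\dd$-pullback attractor is by definition the \emph{minimal} $\dd$-closed, $\dd$-pullback attracting family, so each existence statement reduces to producing a candidate and checking closedness, attraction, and minimality. For (1) I would take the candidate to be $\WWd(X,t)$ itself: it is $\dd$-closed by construction, being an intersection of $\dd$-closures. Using the sequential description of the omega limit set recorded just after its definition, any $\dd$-closed $\dd$-pullback attracting family must contain $\WWd(X,t)$, while $\WWd(X,t)$ is itself attracting, since a failure of attraction would produce points $x_n\in P(t,s_n)X$ with $s_n\to-\infty$ staying a fixed distance from $\WWd(X,t)$ yet having $\dd$-limit points in $\WWd(X,t)$, a contradiction. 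Minimality of $\AAd(t)$ then forces $\AAd(t)=\WWd(X,t)$. For (2) the weak metric supplies compactness at no cost: since $X$ is $\dw$-compact, each $\overline{\bigcup_{r\le s}P(t,r)X}^{\mathrm{w}}$ is nonempty and $\dw$-compact, and these sets decrease as $s\to-\infty$, so $\WWw(X,t)$ is nonempty by the finite intersection property, and the attraction and minimality arguments of (1) run verbatim in the weak topology to give $\AAw(t)=\WWw(X,t)$.

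The heart of the theorem is (3), and this is where \iref{A1} enters decisively. Given $x\in\WWw(X,t)$, the sequential description furnishes $s_n\to-\infty$ and $u_n\in\EE([s_n,\infty))$ with $u_n(t)\xrightarrow{\dw}x$. Restricting each $u_n$ to an exhausting sequence of intervals $[-m,\infty)$, which is legitimate by property (2) of Definition~\ref{GES}, and invoking the compactness of $\EE([-m,\infty))$ in $C([-m,\infty);\Xw)$ from \iref{A1}, a diagonal argument produces a subsequence converging in every $C([-m,\infty);\Xw)$ to a limit $v$. Since each $v|_{[-m,\infty)}$ is a limit of trajectories in the compact set $\EE([-m,\infty))$, it is itself a trajectory, so by property (3) of Definition~\ref{GES} we get $v\in\EE((-\infty,\infty))$ with $v(t)=\lim u_n(t)=x$. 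This yields $\WWw(X,t)\subseteq\{u(t):u\in\EE((-\infty,\infty))\}$. The chain then collapses: every complete-trajectory value $v(t)$ lies in $\WWs(X,t)$, since $v(t)\in P(t,r)X$ for every $r\le t$ so that the constant sequence $v(t)$ exhibits it as a strong omega limit point, while $\WWs(X,t)\subseteq\WWw(X,t)$ because strong closures are contained in weak ones. Hence all four sets coincide.

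Assertion (4) is then bookkeeping built on (3): the complete-trajectory description makes $\AAw(t)$ manifestly pullback invariant and quasi-invariant through $\tilde P$, and maximality follows because any such family consists of points lying on complete trajectories and is therefore contained in $\AAw(t)$. For the weak pullback tracking property (5) I would argue by contradiction: if no $s_0$ worked for given $\ee,t$, there would be $s_n\to-\infty$ and $u_n\in\EE([s_n,\infty))$ lying at $\mathrm{d}_{C([s_n,\infty);\Xw)}$-distance at least $\ee$ from every complete trajectory, and the same \iref{A1}-driven diagonal extraction would produce a complete limit trajectory that the $u_n$ ought eventually to approximate. The step I expect to be most delicate — indeed the crux of the whole theorem — is controlling the weighted supremum metric as its base point $s_n$ recedes to $-\infty$: that metric places its heaviest weight on the window $[s_n,s_n+1]$, so although the geometric weights let me truncate to a finite window, forcing a complete trajectory to match $u_n$ on that receding far-past window is not free. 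Reconciling this far-past matching with the merely local-uniform convergence furnished by the diagonal extraction requires combining the compactness of \iref{A1} with the pullback-attracting property established in (2)--(3), and this interplay is where the real work of the proof lies.
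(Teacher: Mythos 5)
This theorem is quoted from \cite{CK13}; the present paper supplies no proof of it, so there is nothing internal to compare your argument against. Judged on its own terms, your skeleton for (2)--(4) is the standard one (weak compactness of $X$ plus the finite intersection property for existence; the diagonal extraction through the compact sets $\EE([-m,\infty))$ of \iref{A1} for the complete-trajectory characterization), and those parts would go through. But two steps are genuinely problematic.

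First, your proof of (1) claims that $\WWd(X,t)$ is automatically $\dd$-pullback attracting because points $x_n\in P(t,s_n)X$ staying $\ee$-away from it would ``have $\dd$-limit points in $\WWd(X,t)$.'' In the strong metric this is false: $X$ is only $\dw$-compact, so the $x_n$ need have no $\ds$-limit points at all, and indeed if $\WWs(X,t)$ were always attracting the strong pullback attractor would always exist, contradicting the conditional phrasing of (1). The correct route avoids this entirely: each set $\overline{\bigcup_{r\le s}P(t,r)X}^\bullet$ is $\dd$-closed and trivially $\dd$-pullback attracting (take $s_0=s$), so minimality of $\AAd(t)$ gives $\AAd(t)\subseteq\WWd(X,t)$; the reverse inclusion follows because any $\dd$-limit of points $x_n\in P(t,s_n)X$ lies within distance $\ee$ of the closed set $\AAd(t)$ for every $\ee>0$. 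Second, you do not actually prove (5); you correctly identify the crux --- matching a complete trajectory to $u$ on the window nearest the receding base point of the metric --- and stop there. That difficulty is real and, for the metric $\mathrm{d}_{C([s,\infty);\Xw)}$ literally anchored at $s$, fatal: the heaviest weight sits on $[s,s+1]$, where $u(s)$ may be any point of $X$ while $v(s)$ must lie in $\AAw(s)$, so no complete trajectory can track an arbitrary $u$ there. In \cite{CK13} the tracking metric is anchored at the fixed time $t$, i.e.\ the conclusion is $\mathrm{d}_{C([t,\infty);\Xw)}(u,v)<\ee$; with that reading your contradiction-plus-diagonal argument closes immediately, since the extracted subsequence converges to a complete trajectory uniformly on the fixed windows $[t,t+n]$. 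You should either adopt that statement or explain how to handle the receding window; as written, part (5) remains unestablished.
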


\subsection{3D Navier-Stokes Equations}

Also in the paper \cite{CK13}, the authors applied the abstract framework of a generalized evolutionary system to the 3D Navier-Stokes equations with a translationally bounded forcing term. Here, we summarize the setup and major results. We will use this setup for the remainder of the paper. 

The 3D space-periodic, incompressible Navier-Stokes Equations (NSEs) on the periodic domain $\Omega:=\mathbb{T}^3$ are given by 
\begin{equation}
\label{NSE}
\left\{\begin{array}{l}\derivt u-\nu\Delta u+u\cdot\grad u+\grad p=f(t) \\
                       \grad\cdot u=0\end{array}\right.
\end{equation}
where $u$, the velocity vector field, and $p$, the pressure, are unknowns; $\nu>0$ is the kinematic viscosity of the fluid, and $f(t)\in L^2_{loc}(\R,H^{-1}(\Omega)^3)$ is a time-dependent forcing term. Assume that the initial condition $u(\cdot,s)$ and the forcing term $f(t)$ have the property that 
\begin{equation*}
\displaystyle\int_\Omega u(x,s)\ddx=\int_\Omega f(x,t)\ddx =0,
\end{equation*}
for all $t$. Then, we have that 
\begin{equation*}
\int_\Omega u(x,t)\ddx=0
\end{equation*}
for all $t\ge s\in\R$. The functional setting is given below.

Denote by $\ip{\cdot}{\cdot}$ and $\abs{\cdot}$ the $L^2(\Omega)^3$-inner product and the $L^2(\Omega)^3$-norm, respectively. Let $\mathscr{V}$ be given by
\begin{equation*}
\mathscr{V}:=\left\{u\in [C^\infty(\Omega)]^3:\int_\Omega u(x)\ddx
                       =0,~\nabla\cdot u=0\right\}.
\end{equation*}
Next, let $H$ and $V$ be the closures of $\mathscr{V}$ in $L^2(\Omega)^3$ and $H^1(\Omega)^3$, respectively. Denote by $H_w$ the set $H$ endowed with the weak topology.

Let $P_\sigma:L^2(\Omega)^3\rightarrow H$ be the $L^2$ orthogonal projection, known as the Leray projector. Let $A:=-P_\sigma\Delta=-\Delta$ be the Stokes operator with domain $(H^2(\Omega))^3\cap V$. Note that the Stokes operator is a self-adjoint, positive operator with compact inverse. Let 
\begin{equation*}
\norm{u}:=\abs{A^{1/2}u}.
\end{equation*}
Note that $\norm{u}$ is equivalent to the $H^1$ norm of $u$ for $u\in D(A^{1/2})$ by the Poincar\'{e} inequality. That is, $\abs{A^{(k-1)/2}u} \le \lambda_1^{-1/2} \abs{A^{k/2}u}$ where $\lambda_1$ is the first eigenvalue of the Stokes operator $A$ and $k \in \R$. Let $\IP{\cdot}{\cdot}$ denote the corresponding inner product in $H^1$.

Next, denote by $B(u,v):=P_\sigma(u\cdot\nabla v)\in V'$ for each $u,v\in V$. This is a bilinear form with the following property:
\begin{equation*}
\angip{B(u,v)}{w}=-\angip{B(u,w)}{v}
\end{equation*}
for each $u,v,w\in V$.

We can now rewrite (\ref{NSE}) as a differential equation in $V'$. That is, 
\begin{equation}
\label{funcNSE}
\derivt u+\nu Au+B(u,u)=g
\end{equation}
for $g:=P_\sigma f$, and $u$ is a $V$-valued function of time.

\begin{define}
\label{weaksoln}
The function $u:[s,\infty)\rightarrow H$ (or $u:(-\infty,\infty)\rightarrow H$) is a weak solution to (\ref{NSE}) on $[s,\infty)$ (or $(-\infty,\infty)$) if
\begin{enumerate}
\item $\derivt u\in L_{loc}^1([s,\infty);V')$.
\item $u\in C([s,\infty);H_w)\cap L^2_{loc}([s,\infty);V)$.
\item $\ip{\derivt u(t)}{\phi}+\nu\IP{u(t)}{\phi}+\angip{B(u(t),u(t))}{\phi}
           =\angip{g(t)}{\phi}$ for a.e. $t\in[s,\infty)$ and each $\phi\in V$.
\end{enumerate}
\end{define}

\begin{thm}[Leray, Hopf]
\label{LHsolns}
For each $u_0\in H$ and $g\in L^2_{loc}(\R;V')$, there exists a weak solution of (\ref{NSE}) on $[s,\infty)$ with $u(s)=u_0$, and for each $t\ge t_0$, $t_0$ a.e. in $[s,\infty)$ we have the following energy inequality:
\begin{equation}
\label{LHenergyineq}
\abs{u(t)}^2+2\nu\int_{t_0}^t\norm{u(\xi)}^2\ddxi\le\abs{u(t_0)}^2+2\int_{t_0}^t
   \angip{g(\xi)}{u(\xi)}\ddxi.
\end{equation}
\end{thm}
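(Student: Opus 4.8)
The plan is to construct the weak solution by the classical Galerkin method and then extract the energy inequality via weak lower semicontinuity. First I would fix the orthonormal basis $\{w_k\}_{k\in\N}$ of $H$ consisting of eigenfunctions of the Stokes operator $A$, which exists because $A$ is self-adjoint, positive, and has compact inverse. Letting $P_m$ be the $H$-orthogonal projection onto $\mathrm{span}\{w_1,\dots,w_m\}$, I would seek $u_m(t)=\sum_{k=1}^m c_k^m(t)w_k$ solving the projected system
\begin{equation*}
\ip{\derivt u_m}{w_k}+\nu\IP{u_m}{w_k}+\angip{B(u_m,u_m)}{w_k}=\angip{g}{w_k},\qquad k=1,\dots,m,
\end{equation*}
with $u_m(s)=P_mu_0$. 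This is a system of ODEs in the coefficients $c_k^m$ with quadratic (hence locally Lipschitz) nonlinearity, so local existence is standard; global existence on $[s,\infty)$ then follows once the a priori energy bound below is in hand.

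The key a priori estimate comes from testing the Galerkin system against $u_m$ itself. Using the antisymmetry $\angip{B(u,v)}{v}=0$ inherited from $\angip{B(u,v)}{w}=-\angip{B(u,w)}{v}$, the nonlinear term drops out and I obtain the energy identity
\begin{equation*}
\frac{1}{2}\derivt\abs{u_m(t)}^2+\nu\norm{u_m(t)}^2=\angip{g(t)}{u_m(t)}.
\end{equation*}
Bounding the right side by Young's inequality, $\angip{g}{u_m}\le\frac{1}{2\nu}\norm{g}_{V'}^2+\frac{\nu}{2}\norm{u_m}^2$, and integrating gives a uniform (in $m$) bound for $u_m$ in $L^\infty_{loc}([s,\infty);H)\cap L^2_{loc}([s,\infty);V)$. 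To control the time derivative, I would estimate $B(u_m,u_m)$ in $V'$; in three dimensions the Sobolev embedding yields $\norm{B(u_m,u_m)}_{V'}\le C\abs{u_m}^{1/2}\norm{u_m}^{3/2}$, so that $\derivt u_m$ is bounded uniformly in $L^{4/3}_{loc}([s,\infty);V')$.

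With these bounds, the plan is to pass to the limit. By Banach--Alaoglu I would extract a subsequence with $u_m\rightharpoonup u$ weakly-$*$ in $L^\infty_{loc}(H)$ and weakly in $L^2_{loc}(V)$, together with $\derivt u_m\rightharpoonup\derivt u$ in $L^{4/3}_{loc}(V')$. The Aubin--Lions compactness lemma, using the compact embedding of $V$ into $H$ and the time-derivative bound, then upgrades this to strong convergence $u_m\to u$ in $L^2_{loc}([s,\infty);H)$. This strong convergence is exactly what is needed to pass to the limit in the quadratic term $\angip{B(u_m,u_m)}{\phi}=-\angip{B(u_m,\phi)}{u_m}$ against a fixed smooth test function $\phi$; the linear terms pass by weak convergence. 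Hence $u$ satisfies Definition~\ref{weaksoln}, and the regularity $u\in C([s,\infty);H_w)$ follows from $u\in L^\infty(H)$ with $\derivt u\in L^{4/3}(V')$ after a modification on a null set.

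The main obstacle, and the point distinguishing the 3D case, is the energy inequality: unlike the 2D setting there is no uniqueness and we cannot expect equality in the limit. The approach is to integrate the Galerkin energy identity from $t_0$ to $t$,
\begin{equation*}
\abs{u_m(t)}^2+2\nu\int_{t_0}^t\norm{u_m(\xi)}^2\ddxi=\abs{u_m(t_0)}^2+2\int_{t_0}^t\angip{g(\xi)}{u_m(\xi)}\ddxi,
\end{equation*}
and then pass to the limit termwise. The right-hand side converges for a.e. $t_0$, namely those $t_0$ at which $u_m(t_0)\to u(t_0)$ strongly in $H$ along the subsequence (a full-measure set, by the strong $L^2_{loc}(H)$ convergence), and for all $t$. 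On the left, weak lower semicontinuity of the $H$-norm gives $\abs{u(t)}^2\le\liminf_m\abs{u_m(t)}^2$, and likewise $\int_{t_0}^t\norm{u(\xi)}^2\ddxi\le\liminf_m\int_{t_0}^t\norm{u_m(\xi)}^2\ddxi$ for the $L^2(V)$ term. Passing to the limit therefore converts the identity into the inequality \eqref{LHenergyineq}, valid for all $t\ge t_0$ and a.e. $t_0\in[s,\infty)$ (and for $t_0=s$ as well, since $u_m(s)=P_mu_0\to u_0$ gives exact convergence of the initial data). It is precisely this replacement of an equality by the lower-semicontinuity inequality that is the crux of the argument.
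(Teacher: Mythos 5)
The paper states this as a classical theorem of Leray and Hopf and gives no proof, remarking only that such solutions are built ``using a Galerkin technique''; your sketch is exactly that standard argument and is correct, including the key point that the Galerkin energy \emph{identity} degrades to an \emph{inequality} in the limit via weak lower semicontinuity. The only step worth making explicit is that the left-hand side requires $u_m(t)\rightharpoonup u(t)$ in $H$ for \emph{every} $t$ (not just a.e.), which follows from the uniform $L^\infty(H)$ bound together with the $L^{4/3}(V')$ bound on $\derivt u_m$, i.e.\ convergence in $C([s,T];H_w)$ --- a fact you already have implicitly.
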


\begin{define}
\label{LHweaksoln}
A weak solution to (\ref{NSE}) satisfying (\ref{LHenergyineq}) will be called a Leray-Hopf weak solution.
\end{define}

\begin{define}
\label{leraysoln}
A Leray-Hopf weak solution to (\ref{NSE}) on $[s,\infty)$ satisfying the energy inequality 
\begin{equation*}
\abs{u(t)}^2+2\nu\int_s^t\norm{u(\xi)}^2\ddxi\le\abs{u(s)}^2+2\int_s^t
                                               \angip{g(\xi)}{u(\xi)}\ddxi
\end{equation*}
for each $t\ge s$ is called a Leray solution.
\end{define}

A Leray solution is a specific type of Leray-Hopf solution which is strongly continuous at the starting time $s$. A classical argument shows that for any $u_0 \in H$ and any $s \in \R$, one can build a Leray solution starting at time $s$ with initial data $u_0$ using a Galerkin technique.

Fix $\tau>0$. Assume $g$ is translationally bounded in $L^2_{\mathrm{loc}}(\R,V')$. That is, 
\begin{equation*}
\norm{g}_{L^2_b(\tau)}^2:=\sup_{t\in\R}\frac{1}{\tau}\int_t^{t+\tau}
        \norm{g(\xi)}^2_{V'}\ddxi<\infty.
\end{equation*}
First, note that $\norm{g}_{V'}$ and $\norm{g}_{L^2_b(\tau)}$ have the same dimensions. Next, note that the choice of $\tau$ is not particularly important. In fact, for any $\tau,\rho>0$, we have that the norms $\norm{\cdot}_{L^2_b(\tau)}$ and $\norm{\cdot}_{L^2_b(\rho)}$ are equivalent. 

\begin{lem}
Let $\tau,\rho>0$ be given. Assume, without loss of generality that $\tau\le\rho$. Then, for any translationally bounded $g\in L^2_{loc}(\R,V')$,  
\begin{equation*}
\frac{\tau}{\rho}\norm{g}_{L^2_b(\tau)}^2\le
  \norm{g}_{L^2_b(\rho)}^2\le
  \frac{N\tau}{\rho}\norm{g}_{L^2_b(\tau)}^2,
\end{equation*}
where $N$ is any integer so that $N\tau\ge\rho$.
\end{lem}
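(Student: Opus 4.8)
The plan is to read off both inequalities directly from the definition of the translationally bounded norm, using only the nonnegativity of the integrand $\norm{g(\xi)}_{V'}^2$. Throughout I would abbreviate $M_\tau := \norm{g}_{L^2_b(\tau)}^2 = \sup_{t\in\R}\frac{1}{\tau}\int_t^{t+\tau}\norm{g(\xi)}_{V'}^2\,\ddxi$, and similarly $M_\rho$, so that the claim becomes the chain $\frac{\tau}{\rho}M_\tau \le M_\rho \le \frac{N\tau}{\rho}M_\tau$. The two bounds are then proved by a nesting argument and a covering argument respectively.

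For the lower bound I would fix $t\in\R$ and observe that, since $\tau\le\rho$, the window $[t,t+\tau]$ is contained in $[t,t+\rho]$. As $\norm{g(\xi)}_{V'}^2\ge 0$, monotonicity of the integral gives $\int_t^{t+\tau}\norm{g(\xi)}_{V'}^2\,\ddxi \le \int_t^{t+\rho}\norm{g(\xi)}_{V'}^2\,\ddxi \le \rho\,M_\rho$. Dividing by $\tau$ and multiplying by $\frac{\tau}{\rho}$ yields $\frac{\tau}{\rho}\cdot\frac{1}{\tau}\int_t^{t+\tau}\norm{g(\xi)}_{V'}^2\,\ddxi \le M_\rho$ for every $t$; taking the supremum over $t$ on the left produces $\frac{\tau}{\rho}M_\tau \le M_\rho$.

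For the upper bound I would again fix $t$ and cover the long window by $N$ consecutive short windows. Because $N$ is chosen so that $N\tau\ge\rho$, we have $[t,t+\rho]\subseteq\bigcup_{k=0}^{N-1}[t+k\tau,\,t+(k+1)\tau]$, and by nonnegativity together with additivity of the integral,
\begin{equation*}
\int_t^{t+\rho}\norm{g(\xi)}_{V'}^2\,\ddxi
  \le \sum_{k=0}^{N-1}\int_{t+k\tau}^{t+(k+1)\tau}\norm{g(\xi)}_{V'}^2\,\ddxi
  \le \sum_{k=0}^{N-1}\tau M_\tau = N\tau M_\tau,
\end{equation*}
where each summand is bounded by $\tau M_\tau$ directly from the definition of $M_\tau$ applied to the $\tau$-window starting at $t+k\tau$. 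Dividing by $\rho$ and taking the supremum over $t$ gives $M_\rho \le \frac{N\tau}{\rho}M_\tau$, completing the chain.

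I do not expect any genuine analytic obstacle here; the content is entirely bookkeeping. The only points requiring care are keeping the normalizing factors $\frac{1}{\tau}$ and $\frac{1}{\rho}$ straight, and checking that the supremum over $t$ interacts correctly with each inequality — in the lower bound one uses that $\sup_t A(t)\ge\sup_t B(t)$ whenever $A\ge B$ pointwise, while in the upper bound the bound $N\tau M_\tau$ is uniform in $t$, so the supremum passes through without loss. The freedom to take $N$ to be \emph{any} integer with $N\tau\ge\rho$ is exactly what makes the covering in the second step legitimate.
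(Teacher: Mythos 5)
Your proof is correct, and since the paper explicitly omits the proof as ``elementary,'' your nesting-plus-covering argument is exactly the intended bookkeeping: both the lower bound (containment of the $\tau$-window in the $\rho$-window) and the upper bound (covering the $\rho$-window by $N$ consecutive $\tau$-windows, each controlled by $\tau\norm{g}_{L^2_b(\tau)}^2$) are handled correctly, including the interaction with the suprema.
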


The proof is elementary and is thus omitted. Therefore, we may use whatever $\tau>0$ we like in our calculations. Later, we will choose $\tau:=(\nu\lambda_1)^{-1}$. 

As was shown in \cite{CK13}, there exists an absorbing ball for Leray solutions of (\ref{funcNSE}). That is, from the energy inequality and the fact that $g$ is translationally bounded, one can derive the following inequality: 
\begin{equation*}
\abs{u(t)}^2 \le \abs{u(t_0)}^2 e^{\nu\lambda_1(t_0-t)}
    +\frac{\tau\norm{g}_{L^2_b(\tau)}^2}{\nu(1-e^{-\nu\lambda_1\tau})}
\end{equation*}
for almost every $t_0\geq s$ (including $t_0 = s$) and all $t>t_0$. 

Letting 
\begin{equation}
\label{absrad}
R:=\frac{2\tau\norm{g}^2_{L^2_b(\tau)}}{\nu(1-e^{-\nu\lambda_1\tau})}, 
\end{equation}
we define 
\begin{equation*}
X:=\{u \in H:\abs{u}^2\le R\}
\end{equation*}
as a closed absorbing ball in $H$. In particular, $X$ is weakly compact with strong and weak metrics given by 
\begin{equation*}
\ds(u,v):=\abs{u-v} \quad \text{ and } \quad \dw(u,v):=\sum_{k\in\Z^3}
   \frac{1}{2^{\abs{k}}}
       \frac{\abs{\hat{u_k}-\hat{v_k}}}{1+\abs{\hat{u_k}-\hat{v_k}}}
\end{equation*}
for $u,v\in H$ where $\hat{u_k}$ and $\hat{v_k}$ are the Fourier coefficients of $u$ and $v$, respectively. Note that the above weak metric $\dw$ induces the weak topology on $X$. 

Next, we define our generalized evolution system on $X$ by 
\begin{align*}
\EE([s,\infty)):=&\{u:u\mathrm{~is~a~Leray{-}Hopf~solution~of~(\ref{funcNSE})~
            on~}[s,\infty) \\
                 &\mathrm{~and~}u(t)\in X\mathrm{~for~}t\in[s,\infty)\}, \\
\EE((-\infty,\infty)):=&\{u:u\mathrm{~is~a~Leray{-}Hopf~solution~of~
       (\ref{funcNSE})~on~}(-\infty,\infty) \\
                 &\mathrm{~and~}u(t)\in X\mathrm{~for~}t\in(-\infty,\infty)\}.
\end{align*}
Then, $\EE$ satisfies the necessary properties in Definition~\ref{GES} and forms a generalized evolutionary system on $X$.

As observed in \cite{CK13}, we must use Leray-Hopf weak solutions in the definition of our evolutionary system since the restriction of a Leray solution is not necessarily a Leray solution. However, the restriction of a Leray solution is always a Leray-Hopf weak solution. In fact, $\EE$ satisfies \iref{A1}. Therefore, by Theorem~\ref{pullbackresults}, we have the following theorem.

\begin{thm}\cite{CK13}
\label{pullbacknse}
Let $g$ be translationally bounded in $L^2_{loc}(\R,V')$. Then, there exists a weak pullback attractor $\AAw(t)$ for the generalized evolutionary system $\EE$ of Leray-Hopf weak solutions to (\ref{funcNSE}). In particular $\EE$ satisfies $\iref{A1}$. Therefore,  
\begin{equation*}
\AAw(t)=\{u(t):u\in\EE((-\infty,\infty))\}
\end{equation*}
is the maximal invariant and quasi-invariant subset of $X$.
\end{thm}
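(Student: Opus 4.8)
The plan is to observe that, once $\EE$ is shown to be a generalized evolutionary system satisfying assumption \iref{A1}, every assertion of the theorem is an immediate consequence of Theorem~\ref{pullbackresults}. Verifying the three axioms of Definition~\ref{GES} is routine: axiom (1) follows from the Leray--Hopf existence theorem (Theorem~\ref{LHsolns}) together with the absorbing-ball estimate, which furnishes a solution valued in $X$ on each $[s,\infty)$; axiom (2) holds because the restriction of a Leray--Hopf weak solution to a subinterval is again a Leray--Hopf weak solution (this is precisely why the class of Leray--Hopf, rather than Leray, solutions is used); and axiom (3) is the very definition of $\EE((-\infty,\infty))$. Thus the substance of the argument is the compactness statement \iref{A1}, after which parts (2)--(4) of Theorem~\ref{pullbackresults} deliver the existence of $\AAw(t)$ and the identification $\AAw(t)=\{u(t):u\in\EE((-\infty,\infty))\}$ as the maximal invariant and quasi-invariant subset of $X$.

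To prove \iref{A1}, I would take an arbitrary sequence $\{u_n\}\subset\EE([s,\infty))$ and extract a subsequence converging in $C([s,\infty);\Xw)$. Since $u_n(t)\in X$ for all $t$, we have the uniform bound $\abs{u_n(t)}\le R^{1/2}$. Inserting this into the energy inequality~(\ref{LHenergyineq}) produces, on each compact subinterval $[s,T]$, a uniform bound for $u_n$ in $L^2([s,T];V)$; the standard three-dimensional estimate $\norm{B(u_n,u_n)}_{V'}\le C\abs{u_n}^{1/2}\norm{u_n}^{3/2}$ together with~(\ref{funcNSE}) then bounds $\derivt u_n$ uniformly in $L^{4/3}([s,T];V')$. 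By the Aubin--Lions lemma this yields, along a subsequence, strong convergence $u_n\to u$ in $L^2_{loc}([s,\infty);H)$ and weak convergence in $L^2_{loc}([s,\infty);V)$.

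With this convergence in hand, the next step is to check that the limit $u$ lies in $\EE([s,\infty))$. The strong $L^2_{loc}(H)$ convergence permits passage to the limit in the nonlinear term, so $u$ solves~(\ref{funcNSE}) in the weak sense, while weak lower semicontinuity of the norms lets the dissipation term in~(\ref{LHenergyineq}) survive the limit; since $X$ is weakly closed, $u(t)\in X$ for all $t$. To upgrade to convergence in $C([s,\infty);\Xw)$, I would run an Arzel\`a--Ascoli argument adapted to the weak metric $\dw$: for each fixed wavenumber $k$, the $k$-th Fourier coefficient of $u_n(t)$, viewed as a scalar function of $t$, has time derivative controlled in $L^{4/3}$ via the equation and is therefore equicontinuous (with modulus uniform in $n$ by H\"older), so a diagonal extraction gives uniform convergence of every mode; the uniform bound $\abs{u_n(t)}\le R^{1/2}$ controls the high-frequency tail in the definition of $\dw$, delivering uniform weak convergence on each interval $[s,s+n]$.

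The hard part will be reconciling these two modes of convergence so that the limit genuinely satisfies the Leray--Hopf energy inequality~(\ref{LHenergyineq}). That inequality is a pointwise-in-time statement, valid for almost every initial time $t_0$, yet the compactness arguments produce convergence only in integral norms and, after the Arzel\`a--Ascoli step, pointwise in the weak topology. The resolution is to exploit that strong $L^2_{loc}(H)$ convergence forces $u_n(t_0)\to u(t_0)$ strongly in $H$ for almost every $t_0$ along a further subsequence, and then to combine this with the weak continuity $u\in C([s,\infty);H_w)$ and the monotonicity of the dissipation integral to preserve the inequality for all $t\ge t_0$ with $t_0$ almost every. Once this is secured, $u\in\EE([s,\infty))$, the set $\EE([s,\infty))$ is sequentially compact and hence compact in $C([s,\infty);\Xw)$, assumption \iref{A1} holds, and the theorem follows from Theorem~\ref{pullbackresults}.
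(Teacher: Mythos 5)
Your proposal is correct and follows the same route as the paper: verify that $\EE$ is a generalized evolutionary system satisfying \iref{A1} and then invoke Theorem~\ref{pullbackresults}. The paper itself simply cites \cite{CK13} for the verification of the axioms of Definition~\ref{GES} and of \iref{A1}; your Aubin--Lions/Arzel\`a--Ascoli argument for \iref{A1}, including the recovery of the energy inequality for a.e.\ $t_0$ in the limit, is the standard proof given there.
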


In particular, there exists a complete bounded (in the sense of $H$) weak solution to the 3D Navier-Stokes equations. In the next section, we will present an argument demonstrating that when the force is small enough, the weak pullback attractor consists of only one such solution. In this case, we have that 
\begin{equation*}
\AAw(t)=\{u(t)\}, 
\end{equation*}
is trivial. 

\section{Degenerate Pullback Attractors}
\label{trivial}
\subsection{A Criterion for Strong Solutions}
\label{strong}

In our goal of proving that the pullback attractor consists of a single point, we will begin by showing that if the force is small enough, then a complete bounded solution guaranteed by Theorem~\ref{pullbacknse} is, in fact, a strong solution.

\begin{define}
\label{strongsoln}
A weak solution $u$ to (\ref{funcNSE}) will be called strong if $u \in L^\infty_{loc}(\R,V)$.
\end{define}

Let $v$ be a complete bounded solution to (\ref{funcNSE}) as discussed in the previous section. In particular, $v$ satisfies the inequality (\ref{vineq}). Using the Cauchy-Schwarz inequality followed by Young's inequality, we find that 
\begin{equation}
\abs{v(t)}^2 + \nu\int_s^t\norm{v(\xi)}^2\ddxi \le \abs{v_0}^2 + 
   \frac{1}{\nu}\int_s^t\norm{g(\xi)}_{V'}^2\ddxi.
\end{equation}
Using the radius of the absorbing ball given in (\ref{absrad}) and dropping the first term on the left-hand side, we find that 
\begin{equation*}
\nu\int_s^t\norm{v(\xi)}^2\ddxi\le\frac{2\tau\norm{g}_{L^2_b(\tau)}^2}
                                       {\nu(1-e^{-\nu\lambda_1\tau})}
   + \frac{1}{\nu}\int_s^t\norm{g(\xi)}_{V'}^2\ddxi.
\end{equation*}
Thus, we find that for any $s\in\R$ 
\begin{equation}
\label{gradineq1}
\int_s^{s+\tau}\norm{v(\xi)}^2\ddxi
 \le \frac{\tau\norm{g}_{L^2_b(\tau)}^2(3-e^{-\nu\lambda_1\tau})}
          {\nu^2(1-e^{-\nu\lambda_1\tau})}.
\end{equation}

Hence, for any $M \ge 0$, 
\begin{equation*}
\left|\{x\in[s,s+\tau]:\norm{v(x)} \ge M\}\right| \le \frac{1}{M^2}
  \frac{\tau\norm{g}_{L^2_b(\tau)}^2(3-e^{-\nu\lambda_1\tau})}
       {\nu^2(1-e^{-\nu\lambda_1\tau})}.
\end{equation*}
Letting $M := \left(\frac{2\norm{g}_{L^2_b(\tau)}^2(3-e^{-\nu\lambda_1\tau})}{\nu^2(1-e^{-\nu\lambda_1\tau})}\right)^{1/2}$, we have that 
\begin{equation*}
\left|\{x\in[s,s+\tau]:\norm{v(x)} \ge M\}\right| \le \frac{\tau}{2}.
\end{equation*}
We encapsulate the above remarks into the following lemma. 

\begin{lem}
\label{pointbound}
Let $v$ be any complete, bounded solution to (\ref{funcNSE}) with $g$ translationally bounded in $L^2_{loc}(\R,V')$ whose existence is guaranteed by Theorem~\ref{pullbacknse}. Then, for any $s \in \R$, there exists a point $t \in [s,s+\tau]$ so that  
\begin{equation*}
\norm{v(t)}^2
  \le \frac{2\norm{g}_{L^2_b(\tau)}^2(3-e^{-\nu\lambda_1\tau})}
           {\nu^2(1-e^{-\nu\lambda_1\tau})}<\infty.
\end{equation*}
\end{lem}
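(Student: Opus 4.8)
The plan is to convert the time-integrated control on $\norm{v(\cdot)}$ into a pointwise bound at a single well-chosen instant by an averaging (Chebyshev-type) argument. The starting point is the uniform estimate (\ref{gradineq1}), which I would obtain exactly as in the preceding paragraphs: feeding the complete bounded solution $v$ into the energy inequality of Theorem~\ref{LHsolns}, bounding the forcing pairing $\angip{g}{v}$ by Cauchy-Schwarz followed by Young's inequality, and finally invoking $\abs{v(s)}^2 \le R$ with $R$ as in (\ref{absrad}) to absorb the data term. The key structural point is that, because $v$ is complete and takes values in the absorbing ball $X$ for \emph{all} time, this estimate holds with the same constant for every left endpoint $s \in \R$; that uniformity in $s$ is precisely what lets the conclusion be asserted on an arbitrary window $[s,s+\tau]$.

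From (\ref{gradineq1}) I would then extract the pointwise bound as follows. Since $v \in L^2_{loc}(\R,V)$, the map $\xi \mapsto \norm{v(\xi)}^2$ is integrable on $[s,s+\tau]$, and in particular finite for a.e.\ $\xi$; the right-hand side $M^2$ is finite by translational boundedness of $g$. Chebyshev's inequality then gives
\begin{equation*}
\left|\{x \in [s,s+\tau] : \norm{v(x)} \ge M\}\right|
  \le \frac{1}{M^2}\int_s^{s+\tau}\norm{v(\xi)}^2\,\ddxi
  \le \frac{\tau}{2},
\end{equation*}
where the last step combines (\ref{gradineq1}) with the specific choice of $M$. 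Consequently the complementary set $\{x \in [s,s+\tau] : \norm{v(x)} < M\}$ has measure at least $\tau/2 > 0$; in particular it is nonempty, so there exists $t \in [s,s+\tau]$ with $\norm{v(t)}^2 < M^2$, which is exactly the asserted inequality.

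I should note that a sharper constant is available at no cost: the Lebesgue mean-value principle applied directly to (\ref{gradineq1}) already produces a time $t$ with $\norm{v(t)}^2 \le \tau^{-1}\int_s^{s+\tau}\norm{v(\xi)}^2\,\ddxi$, i.e.\ with the factor $2$ removed. I would nonetheless prefer the Chebyshev formulation, since it delivers not merely a single good instant but a \emph{set} of good instants of measure at least $\tau/2$, and this robustness is what one wants when iterating the estimate across successive windows to upgrade $v$ to a strong solution in Section~\ref{strong}.

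As for the main obstacle, there is no deep difficulty here: the lemma is essentially a measure-theoretic repackaging of (\ref{gradineq1}). The only genuine subtlety is regularity bookkeeping. Because $\norm{v(t)}$ is defined only for a.e.\ $t$, the extracted time $t$ must be taken within the full-measure set where $v$ actually lies in $V$ (equivalently, among the Lebesgue points of $\norm{v(\cdot)}^2$), and one must confirm that the energy inequality underlying (\ref{gradineq1}) is legitimately available for the completed trajectory down to every starting time $s$. Both are guaranteed by the Leray-Hopf framework together with the invariance of $X$ recorded in Theorem~\ref{pullbacknse}.
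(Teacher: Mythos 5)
Your proposal is correct and follows essentially the same route as the paper: derive the window estimate (\ref{gradineq1}) from the energy inequality via Cauchy--Schwarz, Young, and the absorbing-ball bound (\ref{absrad}), then apply Chebyshev's inequality with the stated choice of $M$ to find that the bad set has measure at most $\tau/2$, so a good instant $t\in[s,s+\tau]$ exists. The remarks on Lebesgue points and on the sharper mean-value constant are sound but do not change the argument.
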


Now, we add the assumption that $g$ is translationally bounded in \\ $L^2_{loc}(\R,H)$ which will be assumed for the remainder of the paper. That is, we assume that  
\begin{equation*}
\norm{g}_{L^2_0(\tau)}^2:=\sup_{t\in\R}\frac{1}{\tau}\int_t^{t+\tau}
       \abs{g(\xi)}^2\ddxi<\infty.
\end{equation*}
Note that using the Poincar\'{e} inequality, we have that 
\begin{equation*}
\norm{g}_{L^2_b(\tau)}^2\le\lambda_1^{-1}\norm{g}_{L^2_0(\tau)}^2.
\end{equation*}
We will show that if $\norm{g}_{L^2_0(\tau)}$ is sufficiently small, then $v \in L^\infty(\R,V)$. 

To do this, let $t_0\in\R$ be arbitrary. Then, consider the interval $[t_0-\tau,t_0]$. By Lemma~\ref{pointbound}, there exists a point $t \in [t_0-\tau,t_0]$ so that 
\begin{equation*}
\norm{v(t)}^2
  \le\frac{2\norm{g}_{L^2_b(\tau)}^2(3-e^{-\nu\lambda_1\tau})}
          {\nu^2(1-e^{-\nu\lambda_1\tau})}<\infty.
\end{equation*}
Thus, by Leray's characterization \cite{L34}, there is an $\ee > 0$ so that $v$ is a strong solution on $[t,t+\ee)$. We investigate the length of this interval. 

Starting with (\ref{funcNSE}), we take the inner product with $Av$ giving us that 
\begin{equation}
\label{grad1}
\frac{1}{2}\derivt\norm{v}^2+\nu\abs{Av}^2 
   \le \abs{\ip{B(v,v)}{Av}}+\abs{\ip{g}{Av}}.
\end{equation}
Classical esimates give us that 
\begin{align}
\label{sobolevconst}
\abs{\ip{B(v,v)}{Av}} &\le c_0 \norm{v}^{3/2}\abs{Av}^{3/2} \\
\abs{\ip{g}{Av}}      &\le \abs{g}\abs{Av}.
\end{align}
Next, we apply Young's inequality on each of these terms to get that 
\begin{align*}
\abs{\ip{B(v,v)}{Av}} &\le \frac{\nu}{4}\abs{Av}^2 + \frac{c_0}{\nu^3}\norm{v}^6 \\
\abs{\ip{g}{Av}}      &\le \frac{1}{\nu}\abs{g}^2 + \frac{\nu}{4}\abs{Av}^2.
\end{align*}
Using these estimates as well as the Poincar\'{e} inequality, (\ref{grad1}) reduces to 
\begin{equation}
\label{grad2}
\derivt\norm{v}^2 + \nu\lambda_1\norm{v}^2
  \le \frac{2}{\nu}\abs{g}^2 + \frac{c_0}{\nu^3}\norm{v}^6.
\end{equation}

Now, assume that 
\begin{equation*}
\norm{g}_{L^2_0(\tau)}^2
  \le \frac{c_0^{-1/2}\nu^4\lambda_1^{3/2}}{2c_1+4\nu\lambda_1\tau}
\end{equation*}
where $c_1:=\frac{2(3-e^{-\nu\lambda_1\tau})}{1-e^{-\nu\lambda_1\tau}}$. Then, we will show that $\norm{v(t_0)}^2\le c_0^{-1/2}\nu^2\lambda_1^{1/2}$. The following is a modification of the argument given in \cite{cf88}. For completeness, we present the argument in its entirety. 

First, note that the criterion on $\norm{g}_{L^2_0(\tau)}$ guarantees that 
\begin{align*}
\norm{v(t)}^2+\frac{2}{\nu}\int_t^{t+\tau}\abs{g(\xi)}^2\ddxi
  &\le \frac{c_1}{\nu^2}\norm{g}_{L^2_b(\tau)}^2
         +\frac{2\tau}{\nu}\norm{g}_{L^2_0(\tau)}^2 \\
  &\le \frac{c_1}{\nu^2\lambda_1}\norm{g}_{L^2_0(\tau)}^2
        +\frac{2\tau}{\nu}\norm{g}_{L^2_0(\tau)}^2 \\
  &\le \frac{c_0^{-1/2}\nu^2\lambda_1^{1/2}}{2}.
\end{align*}
Then, certainly $\norm{v(t)}^2< c_0^{-1/2}\nu^2\lambda_1^{1/2}$. Let 
\begin{equation*}
T:=\sup\{T_0 \in [t,t+\tau]:\norm{v(T_0)}^2 < c_0^{-1/2}\nu^2\lambda_1^{1/2}\}.
\end{equation*}
Since $v$ is a strong solution at $t$ we get that $T>t$. Assume that $T<t+\tau$. Using $\norm{v(T_0)}^2 < c_0^{-1/2}\nu^2\lambda_1^{1/2}$ for each $T_0\le T$, we find that 
\begin{equation*}
\nu\lambda_1\norm{v(T_0)}^2 - \frac{c_0}{\nu^3}\norm{v(T_0)}^6 
  = \nu\lambda_1\norm{v(T_0)}^2\left(1-\frac{c_0}{\nu^4\lambda_1}
               \norm{v(T_0)}^4\right) \ge 0.
\end{equation*}
Thus, we integrate (\ref{grad2}) from $t$ to $T$ and get that 
\begin{align*}
\norm{v(T)}^2
 &\le \norm{v(t)}^2 + \frac{2}{\nu}\int_t^T\abs{g(\xi)}^2\ddxi \\
 &\le \norm{v(t)}^2 + \frac{2}{\nu}\int_t^{t+\tau}\abs{g(\xi)}^2\ddxi \\
 &\le \frac{c_0^{-1/2}\nu^2\lambda_1^{1/2}}{2}.
\end{align*}
Thus, we must have that $T=t+\tau$. In particular, this is true of $t_0\in[t,t+\tau]$. Since $t_0\in\R$ was arbitrary, we have that 
\begin{equation}
\label{grad3}
\norm{v(t)}^2 < c_0^{-1/2}\nu^2\lambda_1^{1/2}
\end{equation}
for all $t \in \R$. This completes the proof of the following theorem.

\begin{thm}
\label{regularity}
Suppose $g$ is translationally bounded in $L^2_{loc}(\R,H)$ so that 
\begin{equation*}
\norm{g}_{L^2_0(\tau)}^2\le \frac{c_0^{-1/2}\nu^4\lambda_1^{3/2}}
                                 {2c_1+4\nu\lambda_1\tau}
\end{equation*}
for $c_1:=\frac{2(3-e^{-\nu\lambda_1\tau})}{1-e^{-\nu\lambda_1\tau}}$ and $c_0$ the constant given in (\ref{sobolevconst}). Then, there exists a complete, bounded, strong solution to (\ref{funcNSE}) so that $v \in L^\infty(\R,V)$. In particular, $\norm{v(t)}^2 < c_0^{-1/2}\nu^2\lambda_1^{1/2}$ for all $t \in \R$.
\end{thm}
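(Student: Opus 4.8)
The plan is to take the complete, bounded, Leray--Hopf solution $v$ whose existence is already guaranteed by Theorem~\ref{pullbacknse} and show that the smallness hypothesis on $\norm{g}_{L^2_0(\tau)}$ forces the uniform bound $\norm{v(t)}^2 < c_0^{-1/2}\nu^2\lambda_1^{1/2}$ for every $t\in\R$, which is precisely the assertion $v\in L^\infty(\R,V)$. No new solution need be constructed; the entire content is the \emph{a priori} upgrade of this weak solution to a strong one.

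First I would fix an arbitrary $t_0\in\R$ and apply Lemma~\ref{pointbound} on the interval $[t_0-\tau,t_0]$ to locate a point $t\in[t_0-\tau,t_0]$, so that $t_0\in[t,t+\tau]$, at which $\norm{v(t)}^2$ is controlled by $2\norm{g}_{L^2_b(\tau)}^2(3-e^{-\nu\lambda_1\tau})/(\nu^2(1-e^{-\nu\lambda_1\tau}))$. Passing this through the Poincar\'e comparison $\norm{g}_{L^2_b(\tau)}^2\le\lambda_1^{-1}\norm{g}_{L^2_0(\tau)}^2$ and the smallness hypothesis, I would verify that $\norm{v(t)}^2$ lies strictly below the threshold $c_0^{-1/2}\nu^2\lambda_1^{1/2}$, in fact below half of it. By Leray's local characterization \cite{L34}, the small size of $\norm{v(t)}$ makes $v$ a strong solution on a short interval $[t,t+\ee)$, which legitimizes testing (\ref{funcNSE}) against $Av$ and hence the differential inequality (\ref{grad2}), obtained from the Sobolev estimate (\ref{sobolevconst}) and Young's inequality.

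The heart of the argument is then a barrier (continuity) argument on (\ref{grad2}). The key algebraic observation is that the threshold is calibrated so that the nonlinear term is absorbed by the linear dissipation exactly when $\norm{v}^2\le c_0^{-1/2}\nu^2\lambda_1^{1/2}$: there one has $\nu\lambda_1\norm{v}^2-\frac{c_0}{\nu^3}\norm{v}^6=\nu\lambda_1\norm{v}^2(1-c_0\nu^{-4}\lambda_1^{-1}\norm{v}^4)\ge0$. I would therefore set $T:=\sup\{T_0\in[t,t+\tau]:\norm{v(T_0)}^2<c_0^{-1/2}\nu^2\lambda_1^{1/2}\}$, note $T>t$ from the strict starting bound, and suppose for contradiction that $T<t+\tau$. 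On $[t,T]$ the sign observation lets me drop the good terms and integrate (\ref{grad2}) to get $\norm{v(T)}^2\le\norm{v(t)}^2+\frac{2}{\nu}\int_t^{t+\tau}\abs{g}^2\,\ddxi\le\frac{1}{2}c_0^{-1/2}\nu^2\lambda_1^{1/2}$, which by continuity of $\norm{v(\cdot)}$ contradicts the definition of $T$. Hence $T=t+\tau$, so $\norm{v(t_0)}^2<c_0^{-1/2}\nu^2\lambda_1^{1/2}$, and since $t_0$ was arbitrary the bound holds on all of $\R$.

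I expect the delicate point to be the interaction between \emph{local} strong regularity and the \emph{global} barrier: the inequality (\ref{grad2}) is available only while $v$ is strong, so one must simultaneously argue that regularity persists across the whole of $[t,t+\tau]$ rather than merely on $[t,t+\ee)$. This is resolved by the standard continuation principle, namely that a strong solution extends as long as its $V$-norm stays finite, in tandem with the barrier itself, which keeps $\norm{v}^2$ strictly below the threshold and thereby precludes any breakdown of regularity before $t+\tau$. Once this bootstrap is secured, the remaining steps (deriving (\ref{grad2}) and verifying the numerical constants against the stated bound on $\norm{g}_{L^2_0(\tau)}$) are routine.
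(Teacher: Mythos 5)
Your proposal is correct and follows essentially the same route as the paper: locate a good time $t\in[t_0-\tau,t_0]$ via Lemma~\ref{pointbound}, use Leray's characterization to start a local strong solution, derive the differential inequality (\ref{grad2}) by testing against $Av$, and run the barrier argument with the supremum $T$ and the sign observation to propagate the bound $\norm{v}^2<c_0^{-1/2}\nu^2\lambda_1^{1/2}$ up to $t+\tau$, hence to the arbitrary $t_0$. Your explicit remark on coupling the barrier with the continuation principle for strong solutions is a point the paper leaves implicit, but it does not change the argument.
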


We now let $\tau:=(\nu\lambda_1)^{-1}$. For simplicity, we set 
\begin{align*}
\norm{g}_{L^2_b((\nu\lambda_1)^{-1})}&=:\norm{g}_{L^2_b} \\
\norm{g}_{L^2_0((\nu\lambda_1)^{-1})}&=:\norm{g}_{L^2_0}.
\end{align*}
Then, we can express Theorem~\ref{regularity} in terms of the non-dimensional 3D Grashof number
\begin{equation*}
G:= \frac{\norm{g}_{L^2_0}}{\nu^2\lambda_1^{3/4}}.
\end{equation*}

\begin{cor}
\label{regularity2}
Suppose $g$ is translationally bounded in $L^2_{loc}(\R,H)$ so that 
\begin{equation*}
G^2 = \frac{\norm{g}_{L^2_0}^2}{\nu^4\lambda_1^{3/2}}\le\frac{c_0^{-1/2}}{2c_1+4}
\end{equation*}
for $c_1:=\frac{2(3-e^{-1})}{1-e^{-1}}$ and $c_0$ the constant given in (\ref{sobolevconst}). Then, there exists a complete, bounded, strong solution to (\ref{funcNSE}) so that $v \in L^\infty(\R,V)$. In particular, $\norm{v(t)}^2 < c_0^{-1/2}\nu^2\lambda_1^{1/2}$ for all $t \in \R$.
\end{cor}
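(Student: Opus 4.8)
The plan is to obtain Corollary~\ref{regularity2} as a direct specialization of Theorem~\ref{regularity}, taking the free parameter to be $\tau := (\nu\lambda_1)^{-1}$ and rewriting the hypothesis in terms of the non-dimensional Grashof number $G$. No new analytic work is required; the entire content is the algebraic verification that the abstract smallness threshold of Theorem~\ref{regularity} collapses to the stated Grashof criterion under this choice of $\tau$.

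First I would record the key simplification induced by $\tau = (\nu\lambda_1)^{-1}$, namely $\nu\lambda_1\tau = 1$. This makes $e^{-\nu\lambda_1\tau} = e^{-1}$, so that the constant $c_1 = \frac{2(3-e^{-\nu\lambda_1\tau})}{1-e^{-\nu\lambda_1\tau}}$ appearing in Theorem~\ref{regularity} reduces exactly to $c_1 = \frac{2(3-e^{-1})}{1-e^{-1}}$, which is the constant named in the corollary. Likewise the denominator $2c_1 + 4\nu\lambda_1\tau$ of the threshold becomes $2c_1 + 4$.

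Next I would rewrite the hypothesis. With the abbreviation $\norm{g}_{L^2_0} := \norm{g}_{L^2_0((\nu\lambda_1)^{-1})}$ fixed just above the corollary, the condition of Theorem~\ref{regularity} reads $\norm{g}_{L^2_0}^2 \le \frac{c_0^{-1/2}\nu^4\lambda_1^{3/2}}{2c_1+4}$. Dividing both sides by $\nu^4\lambda_1^{3/2}$ and recognizing the left-hand side as $G^2 = \frac{\norm{g}_{L^2_0}^2}{\nu^4\lambda_1^{3/2}}$ turns this into exactly $G^2 \le \frac{c_0^{-1/2}}{2c_1+4}$, which is the hypothesis of the corollary. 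The conclusion of Theorem~\ref{regularity} then transfers verbatim: the bound $\norm{v(t)}^2 < c_0^{-1/2}\nu^2\lambda_1^{1/2}$ for all $t\in\R$ contains no $\tau$, so it is unaffected by the specialization, and the existence of a complete, bounded, strong solution with $v \in L^\infty(\R,V)$ is precisely the assertion of the theorem.

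Since the argument is pure substitution, there is no genuine obstacle. The only points warranting care are bookkeeping: confirming that $G = \frac{\norm{g}_{L^2_0}}{\nu^2\lambda_1^{3/4}}$ is indeed dimensionless, so that the resulting criterion $G^2 \le \frac{c_0^{-1/2}}{2c_1+4}$ is genuinely scale-invariant, and confirming that $\tau = (\nu\lambda_1)^{-1}$ is an admissible choice of the free parameter. The latter is justified by the earlier equivalence of the norms $\norm{\cdot}_{L^2_b(\tau)}$ for all $\tau>0$ and by the fact that Theorem~\ref{regularity} holds for every $\tau>0$, so fixing this particular value is merely a convenient normalization.
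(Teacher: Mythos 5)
Your proposal is correct and is exactly the route the paper takes (implicitly): the corollary is obtained from Theorem~\ref{regularity} by setting $\tau=(\nu\lambda_1)^{-1}$, so that $\nu\lambda_1\tau=1$, $e^{-\nu\lambda_1\tau}=e^{-1}$, the denominator becomes $2c_1+4$, and dividing the threshold by $\nu^4\lambda_1^{3/2}$ yields the stated bound on $G^2$. Your added remarks on the admissibility of this choice of $\tau$ and the nondimensionality of $G$ are sound bookkeeping that the paper leaves unstated.
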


It is also worthwile to note that the above argument proves the strongness of all complete trajectories in our generalized evolutionary system $\EE$. In fact, it proves that if $u \in \EE([s,\infty))$, then for $t > s+\tau$, $u:[t,\infty) \rightarrow V$ is a strong solution.

\subsection{A Serrin-type Argument}
\label{serrin}

In (\cite{S62}), Serrin presents an argument for the uniqueness of weak solutions in an interval of regularity (where a strong solution exists). Using a modification of the argument as it is presented in (\cite{T84}), we obtain the required argument for the existence of degenerate pullback attractors.

Let $v$ be a complete, bounded strong solution to (\ref{funcNSE}) on $(-\infty,\infty)$ guaranteed by Theorem~\ref{regularity}. Let $u$ be another Leray-Hopf weak solution to (\ref{funcNSE}) on $[T,\infty)$ and let $w:=u-v$. Then, $u$ and $v$ satisfy 
\begin{align}
\abs{u(t)}^2 + 2\nu\int_s^t\norm{u(\xi)}^2\ddxi 
  &\le \abs{u_0}^2 + 2\int_s^t\angip{g(\xi)}{u(\xi)}\ddxi \label{uineq}, \\ 
\abs{v(t)}^2 + 2\nu\int_s^t\norm{v(\xi)}^2\ddxi 
  &= \abs{v_0}^2+2\int_s^t\angip{g(\xi)}{v(\xi)}\ddxi \label{vineq},
\end{align}
respectively for a.a. $s\geq T$, all $t\geq s$,  with $u_0 := u(s)$ and $v_0 := v(s)$. Also, as seen in Temam's book \cite{T84}
\begin{align*}
\ip{u(t)}{v(t)} + 2\nu\int_s^t\IP{u(\xi)}{v(\xi)}\ddxi 
                       \numberthis \label{weakdifference}
           =& \ip{u(s)}{v(s)} \\
           &+   \int_s^t\angip{g(\xi)}{u(\xi)+v(\xi)}\ddxi \\
           &-   \int_s^t\angip{B(w(\xi),w(\xi))}{v(\xi)}\ddxi.
\end{align*}

Adding (\ref{uineq}) to (\ref{vineq}) and then subtracting twice (\ref{weakdifference}), we get that
\begin{equation}
\label{wineq}
\abs{w(t)}^2 + 2\nu\int_s^t\norm{w(\xi)}^2\ddxi
   \le \abs{w(s)}^2 + 2\int_s^t\angip{B(w(\xi),w(\xi))}{v(\xi)}\ddxi.
\end{equation}

We estimate the nonlinear term using classical estimates. That is, we find that 
\begin{align*}
\abs{\angip{B(w,w)}{v}}
  \le C\abs{w}^{1/4}\norm{w}^{7/4}
         \abs{v}^{1/4}\norm{v}^{3/4} \\
  \le \frac{\nu}{2}\norm{w}^2 + \frac{C}{\nu^7}
         \abs{v}^2\norm{v}^6\abs{w}^2
\end{align*}
after applying the Young's inequality. Since $v\in L^{\infty}(\R,H)\cap L^{\infty}(\R,V)$, we use (\ref{absrad}) and (\ref{grad3}) to estimate (\ref{wineq}) by 
\begin{equation}
\label{grashof}
\abs{w(t)}^2-\abs{w(s)}^2
  \le \nu\lambda_1\int_s^t\left(C\frac{\tau\norm{g}_{L^2_0(\tau)}^2}
                                     {\nu^3\lambda_1^{1/2}}
                               -1\right)\abs{w(\xi)}^2\ddxi.
\end{equation}

Assuming that $\norm{g}^2_{L^2_0(\tau)}$ is sufficiently small, we can ensure that \\ $C\tau\norm{g}^2_{L^2_0(\tau)}<\nu^3\lambda_1^{1/2}$ giving us that 
\begin{equation}
\label{wineq2}
\abs{w(t)}^2 - \abs{w(s)}^2 \le -M\int_s^t\abs{w(\xi)}^2\ddxi
\end{equation}
for $M := \nu\lambda_1\left(1-C\frac{\tau\norm{g}^2_{L^2_0(\tau)}}
{\nu^3\lambda_1^{1/2}}\right)>0$. 
Thus, after applying Gronwall's inequality, we have that 
\begin{equation*}
\abs{w(t)}^2 \le \abs{w(s)}^2e^{M(s-t)}.
\end{equation*}
In particular, if $u$ is also a complete bounded Leray-Hopf solution, i.e., $T=-\infty$ and $u\in \EE((-\infty,\infty))$, then for each $t$ fixed we can take a limit as $s\rightarrow-\infty$
obtaining  $\abs{w(t)} =0$, i.e, $u(t)=v(t)$. This completes the proof of the following theorem.

\begin{thm}
\label{singleton}
Let $g$ be translationally bounded in $L^2_{loc}(\R,H)$. Assume that $\norm{g}_{L^2_0(\tau)}$ is sufficiently small, then the weak pullback attractor for (\ref{funcNSE}) is a single point, 
\begin{equation*}
\AAw(t)=\{v(t)\}
\end{equation*}
for some complete, bounded, strong solution to (\ref{funcNSE}).
\end{thm}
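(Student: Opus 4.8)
The plan is to combine the structural description of the attractor from Theorem~\ref{pullbacknse} with the Serrin-type comparison estimate developed above. By Theorem~\ref{pullbacknse}, the weak pullback attractor is exactly $\AAw(t)=\{u(t):u\in\EE((-\infty,\infty))\}$, the set of time-$t$ values of all complete bounded Leray-Hopf solutions. Thus it suffices to produce one distinguished complete bounded solution $v$ and then show that every complete bounded Leray-Hopf solution coincides with it. Theorem~\ref{regularity} supplies such a $v$: provided $\norm{g}_{L^2_0(\tau)}$ is small enough, $v$ is in fact strong, with the uniform bound $\norm{v(t)}^2<c_0^{-1/2}\nu^2\lambda_1^{1/2}$ for all $t\in\R$. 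I would fix this $v$ for the remainder.

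Next I would carry out the comparison. Let $u\in\EE((-\infty,\infty))$ be arbitrary and set $w:=u-v$. The key input is that $u$, being Leray-Hopf, obeys the energy \emph{inequality} (\ref{uineq}), while $v$, being strong, obeys the energy \emph{equality} (\ref{vineq}); together with the cross-term identity (\ref{weakdifference}) these combine (adding (\ref{uineq}) and (\ref{vineq}), subtracting twice (\ref{weakdifference})) to yield the inequality (\ref{wineq}) for $\abs{w}^2$. The nonlinear term $\angip{B(w,w)}{v}$ is then estimated by the classical interpolation bound and Young's inequality, after which the uniform bounds on $\abs{v}$ (from the absorbing ball (\ref{absrad})) and on $\norm{v}$ (from (\ref{grad3})) reduce everything to (\ref{grashof}).

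Finally, I would impose the smallness condition $C\tau\norm{g}^2_{L^2_0(\tau)}<\nu^3\lambda_1^{1/2}$ so that the constant $M=\nu\lambda_1\bigl(1-C\tau\norm{g}^2_{L^2_0(\tau)}/(\nu^3\lambda_1^{1/2})\bigr)$ in (\ref{wineq2}) is strictly positive. Gronwall's inequality then gives $\abs{w(t)}^2\le\abs{w(s)}^2 e^{M(s-t)}$ for all $s\le t$. Because both $u$ and $v$ take values in the absorbing ball $X$, the quantity $\abs{w(s)}$ is bounded uniformly in $s$, so fixing $t$ and letting $s\to-\infty$ forces $\abs{w(t)}=0$, i.e.\ $u(t)=v(t)$. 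As $u$ and $t$ were arbitrary, $\AAw(t)=\{v(t)\}$.

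The main obstacle I anticipate is bookkeeping the two distinct smallness requirements and verifying they are compatible: one needs $\norm{g}_{L^2_0(\tau)}$ small enough for Theorem~\ref{regularity} to produce the strong solution $v$, and separately small enough to make $M>0$ in the Serrin estimate; one simply takes the stronger of the two thresholds. The more delicate conceptual point is the asymmetry between $u$ and $v$: the argument genuinely requires $v$ to be strong, so that (\ref{weakdifference}) holds as an equality and the nonlinear term is controllable, while only the energy \emph{inequality} is available for the competitor $u$. It is precisely the uniform gradient bound (\ref{grad3}) on the strong solution that makes the coefficient in (\ref{grashof}) small and hence drives the contraction, and it is the pullback limit $s\to-\infty$—rather than a forward-in-time uniqueness statement—that upgrades this contraction into the collapse of the entire attractor onto the single trajectory $v$.
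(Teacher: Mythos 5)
Your proposal is correct and follows essentially the same route as the paper: invoke Theorem~\ref{pullbacknse} for the structure of $\AAw(t)$, take the strong solution $v$ from Theorem~\ref{regularity}, derive (\ref{wineq}) from (\ref{uineq}), (\ref{vineq}), and (\ref{weakdifference}), estimate the nonlinear term to reach (\ref{grashof}), and use Gronwall with $s\to-\infty$ to conclude $u(t)=v(t)$. Your observations about the asymmetry between $u$ and $v$ and the two smallness thresholds match the paper's reasoning.
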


Again, if we let $\tau := (\nu\lambda_1)^{-1}$, then (\ref{grashof}) simplifies to 
\begin{equation}
\label{grashof2}
\abs{w(t)}^2 - \abs{w(s)}^2
  \le \nu\lambda_1\int_s^t(CG^2-1)\abs{w(\xi)}^2\ddxi.
\end{equation}
So, we can restate (\ref{singleton}) once again in terms of the 3D Grashof constant.

\begin{cor}
Let $g$ be translationally bounded in $L^2_{loc}(\R,H)$. Assume that the Grashof number $G$ given by 
\begin{equation*}
G=\frac{\norm{g}_{L^2_0}}{\nu^2\lambda_1^{3/4}}
\end{equation*}
is sufficiently small, then the weak pullback attractor for (\ref{funcNSE}) is a single point, 
\begin{equation*}
\AAw(t)=\{v(t)\}
\end{equation*}
for some complete, bounded, strong solution to (\ref{funcNSE}).
\end{cor}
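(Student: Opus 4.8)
The plan is to deduce this corollary directly from Theorem~\ref{singleton} by fixing $\tau := (\nu\lambda_1)^{-1}$ and rewriting the smallness hypothesis in non-dimensional form. The central observation is that with this choice of $\tau$ the dimensionless combination governing the Serrin estimate collapses exactly onto the square of the Grashof number: substituting $\tau = (\nu\lambda_1)^{-1}$ into $\frac{\tau\norm{g}_{L^2_0(\tau)}^2}{\nu^3\lambda_1^{1/2}}$ yields $\frac{\norm{g}_{L^2_0}^2}{\nu^4\lambda_1^{3/2}} = G^2$. Hence the uniqueness condition $C\tau\norm{g}_{L^2_0(\tau)}^2 < \nu^3\lambda_1^{1/2}$ appearing in the proof of Theorem~\ref{singleton} is precisely equivalent to $CG^2 < 1$, i.e. $G < C^{-1/2}$, which is already encoded in (\ref{grashof2}).

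First I would fix a smallness threshold on $G$ small enough to satisfy simultaneously (i) the existence condition of Corollary~\ref{regularity2}, namely $G^2 \le \frac{c_0^{-1/2}}{2c_1+4}$, which guarantees a complete, bounded, strong solution $v \in L^\infty(\R,V)$ with $\norm{v(t)}^2 < c_0^{-1/2}\nu^2\lambda_1^{1/2}$, and (ii) the uniqueness condition $CG^2 < 1$. Taking $G$ below the minimum of the two thresholds makes both available at once; since the two bounds originate from distinct Sobolev estimates (one controlling $\abs{\ip{B(v,v)}{Av}}$, the other $\abs{\angip{B(w,w)}{v}}$), I would keep their constants labeled separately rather than attempt to merge them.

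Next, with such a $v$ in hand, I would invoke the Serrin-type estimate already assembled above. For any complete, bounded Leray-Hopf solution $u \in \EE((-\infty,\infty))$, setting $w := u - v$ and combining (\ref{grashof2}) with $CG^2 < 1$ gives
\begin{equation*}
\abs{w(t)}^2 - \abs{w(s)}^2 \le -M\int_s^t\abs{w(\xi)}^2\ddxi, \qquad M := \nu\lambda_1(1 - CG^2) > 0,
\end{equation*}
so that Gronwall's inequality produces $\abs{w(t)}^2 \le \abs{w(s)}^2 e^{M(s-t)}$. Because both $u$ and $v$ take values in the absorbing ball $X$, the quantity $\abs{w(s)}^2$ stays bounded uniformly in $s$, and letting $s \to -\infty$ forces $\abs{w(t)} = 0$, hence $u(t) = v(t)$ for all $t$. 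By Theorem~\ref{pullbacknse}, each section satisfies $\AAw(t) = \{u(t) : u \in \EE((-\infty,\infty))\}$; identifying every complete trajectory with $v$ collapses this to $\AAw(t) = \{v(t)\}$.

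The step I expect to be the main obstacle is not any single inequality—those are inherited verbatim from the previous subsections—but rather justifying the passage $s \to -\infty$ in the Gronwall bound, which rests entirely on the uniform $H$-boundedness of complete trajectories. This is exactly where the global-in-time confinement to $X$ is indispensable: it is what upgrades the pointwise exponential decay of $\abs{w}$ into genuine uniqueness of the complete bounded trajectory, and thereby into degeneracy of the weak pullback attractor.
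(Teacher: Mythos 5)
Your proposal is correct and follows essentially the same route as the paper: the paper's own justification of this corollary is precisely the substitution $\tau = (\nu\lambda_1)^{-1}$, which turns the coefficient in (\ref{grashof}) into $CG^2 - 1$ as recorded in (\ref{grashof2}), after which the Gronwall argument and the limit $s\to-\infty$ from the proof of Theorem~\ref{singleton} apply verbatim. Your additional care in requiring $G$ to also satisfy the threshold of Corollary~\ref{regularity2} (so that the strong solution $v$ exists) is a sensible explicit bookkeeping of what the paper bundles into ``sufficiently small.''
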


\begin{rmk}
Theorem~\ref{singleton} together with Theorem~\ref{pullbacknse} imply that there exists a unique complete bounded Leray-Hopf solution of the 3D Navier-Stokes equations provided the force is small enough.
\end{rmk}
\subsection{Periodic Force}
\label{periodic}

The existence of a unique periodic solution to the 3D Navier-Stokes equations is a remarkable consequence of this Theorem. To begin, let the force $f$ in (\ref{NSE}) be periodic in $L^2_{loc}(\R,L^2(\Omega)^3)$ with period $\rho$. Then, the projected force $g$ in (\ref{funcNSE}) is also periodic in $L^2_{loc}(\R,H)$ with period $\rho$. A straightforward argument shows that $g$ is translationally bounded. Thus, by Theorem~\ref{singleton}, if $g$ is sufficiently small, there exists a unique complete bounded Leray-Hopf solution $v(t)$ to (\ref{funcNSE}), which
is actually a strong solution. We will show that $v(t)$ is, in fact, periodic.

\begin{thm}
\label{periodicforce}
Let $g$ be periodic in $L^2_{loc}(\R,H)$ with period $\rho$. Assume that $g$ is sufficiently small. Then, there exists a unique, periodic, strong solution $v(t)$ to (\ref{funcNSE}). In particular, $v(t)$ has period $\rho$.
\end{thm}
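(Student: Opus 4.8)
The plan is to exploit the uniqueness of the complete bounded solution $v$ together with the $\rho$-periodicity of the forcing. The key observation is that time-translation by the period $\rho$ maps the evolutionary system $\EE$ to itself: since $g(\cdot+\rho)=g(\cdot)$, if $v$ is a Leray-Hopf weak solution of (\ref{funcNSE}) on $(-\infty,\infty)$ then the shifted function $v_\rho(t):=v(t+\rho)$ is again a Leray-Hopf weak solution of the \emph{same} equation (\ref{funcNSE}) on $(-\infty,\infty)$. First I would verify this invariance carefully at the level of the weak formulation in Definition~\ref{weaksoln}, checking that each of the three defining conditions, and in particular the energy inequality (\ref{vineq}), is preserved under the substitution $t\mapsto t+\rho$; this is where periodicity of $g$ enters, since the inhomogeneous terms $\angip{g(\xi)}{\,\cdot\,}$ transform correctly precisely when $g$ is $\rho$-periodic.

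Next I would record that $v_\rho$ is not merely a weak solution but a \emph{complete bounded} one: boundedness is immediate because $v_\rho(t)=v(t+\rho)\in X$ for all $t$, so $v_\rho\in\EE((-\infty,\infty))$. Thus both $v$ and $v_\rho$ are complete bounded Leray-Hopf solutions of (\ref{funcNSE}). Under the smallness hypothesis on $g$, Theorem~\ref{singleton} (equivalently, the Remark following it) asserts that there is \emph{only one} such solution. Therefore $v_\rho=v$ as elements of $\EE((-\infty,\infty))$, which is exactly the statement $v(t+\rho)=v(t)$ for all $t\in\R$; that is, $v$ has period $\rho$. Uniqueness and strongness of $v$ are already furnished by Theorem~\ref{singleton} and Theorem~\ref{regularity}, so nothing further is needed there.

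The main obstacle, and the only step requiring genuine care, is the translation-invariance claim of the first paragraph. One must confirm that the smallness condition on $g$ used in Theorem~\ref{singleton} is itself translation-invariant, which it is, since $\norm{g}_{L^2_0(\tau)}$ is defined as a supremum over all time windows and is manifestly unchanged by shifting $g$; hence the hypothesis of Theorem~\ref{singleton} holds for the shifted problem as well (indeed the shifted problem is the identical problem). A subtle point to address is that uniqueness in Theorem~\ref{singleton} is uniqueness \emph{within} $\EE((-\infty,\infty))$, i.e. among complete bounded Leray-Hopf solutions; so I must make sure $v_\rho$ genuinely lies in this class rather than in some larger solution set. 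Once translation-invariance of both the equation and the solution class is established, the periodicity follows immediately from uniqueness, with no additional estimates. I would also remark that this argument bypasses the usual Poincar\'e-map and fixed-point machinery entirely, deriving periodicity as a direct corollary of the degeneracy of the pullback attractor.
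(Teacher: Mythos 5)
Your proposal is correct and follows essentially the same route as the paper: shift $v$ by the period $\rho$, use the $\rho$-periodicity of $g$ to see that $v(\cdot+\rho)$ is again a complete bounded Leray--Hopf solution of (\ref{funcNSE}), and conclude $v(\cdot+\rho)=v$ from the uniqueness furnished by Theorem~\ref{singleton}. Your added care about translation-invariance of the solution class and of the smallness condition is a reasonable elaboration of details the paper leaves implicit, but it is not a different argument.
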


\begin{proof}
Due to Theorem~\ref{singleton}, we only must show that the unique complete bounded solution $v(t)$ has period $\rho$. To this end, note that $v(t)$ satisfies the equation 
\begin{equation}
\label{periodic1}
\derivt v(t)+\nu Av(t)+B(v(t),v(t))=g(t).
\end{equation}
Then, of course, $v$ satisfies 
\begin{equation*}
\derivt v(t+\rho)-\nu Av(t+\rho)+B(v(t+\rho),v(t+\rho))=g(t+\rho).
\end{equation*}
But, $g(t+\rho)=g(t)$. So, $v(\cdot+\rho)$ also satisfies (\ref{periodic1}). By uniqueness of a complete bounded solution, $v(t+\rho)=v(t)$.
\end{proof}

\bibliographystyle{abbrv}
\bibliography{refs}

\begin{thebibliography}{10}

\bibitem{CLR13}
A.~N. Carvalho, J.~A. Langa, and J.~C. Robinson.
\newblock {\em Attractors for infinite-dimensional non-autonomous dynamical
  systems}, volume 182 of {\em Applied Mathematical Sciences}.
\newblock Springer, New York, 2013.

\bibitem{CV02}
V.~V. Chepyzhov and M.~I. Vishik.
\newblock {\em Attractors for equations of mathematical physics}, volume~49 of
  {\em American Mathematical Society Colloquium Publications}.
\newblock American Mathematical Society, Providence, RI, 2002.

\bibitem{C09}
A.~Cheskidov.
\newblock Global attractors of evolutionary systems.
\newblock {\em J. Dynam. Differential Equations}, 21(2):249--268, 2009.

\bibitem{CF06}
A.~Cheskidov and C.~Foias.
\newblock On global attractors of the 3{D} {N}avier-{S}tokes equations.
\newblock {\em J. Differential Equations}, 231(2):714--754, 2006.

\bibitem{CK13}
A.~{Cheskidov} and L.~{Kavlie}.
\newblock {Pullback Attractors for Generalized Evolutionary Systems}.
\newblock {\em ArXiv e-prints}, Oct. 2013.

\bibitem{CF85}
P.~Constantin and C.~Foias.
\newblock Global {L}yapunov exponents, {K}aplan-{Y}orke formulas and the
  dimension of the attractors for {$2$}{D} {N}avier-{S}tokes equations.
\newblock {\em Comm. Pure Appl. Math.}, 38(1):1--27, 1985.

\bibitem{cf88}
P.~Constantin and C.~Foias.
\newblock {\em Navier-{S}tokes equations}.
\newblock Chicago Lectures in Mathematics. University of Chicago Press,
  Chicago, IL, 1988.

\bibitem{CFT85}
P.~Constantin, C.~Foias, and R.~Temam.
\newblock Attractors representing turbulent flows.
\newblock {\em Mem. Amer. Math. Soc.}, 53(314):vii+67, 1985.

\bibitem{FP67}
C.~Foia{\c{s}} and G.~Prodi.
\newblock Sur le comportement global des solutions non-stationnaires des
  \'equations de {N}avier-{S}tokes en dimension {$2$}.
\newblock {\em Rend. Sem. Mat. Univ. Padova}, 39:1--34, 1967.

\bibitem{FT87}
C.~Foias and R.~Temam.
\newblock The connection between the {N}avier-{S}tokes equations, dynamical
  systems, and turbulence theory.
\newblock In {\em Directions in partial differential equations ({M}adison,
  {WI}, 1985)}, volume~54 of {\em Publ. Math. Res. Center Univ. Wisconsin},
  pages 55--73. Academic Press, Boston, MA, 1987.

\bibitem{GS04}
G.~Galdi and H.~Sohr.
\newblock Existence and uniqueness of time-periodic physically reasonable
  {N}avier-{S}tokes flow past a body.
\newblock {\em Arch. Ration. Mech. Anal.}, 172(3):363--406, 2004.

\bibitem{J60}
V.~I. Judovi{\v{c}}.
\newblock Periodic motions of a viscous incompressible fluid.
\newblock {\em Soviet Math. Dokl.}, 1:168--172, 1960.

\bibitem{K13}
M.~{Kyed}.
\newblock {Existence and regularity of time-periodic solutions to the
  three-dimensional Navier-Stokes equations}.
\newblock {\em ArXiv e-prints}, Sept. 2013.

\bibitem{L63}
O.~A. Ladyzhenskaya.
\newblock {\em The mathematical theory of viscous incompressible flow}.
\newblock Revised English edition. Translated from the Russian by Richard A.
  Silverman. Gordon and Breach Science Publishers, New York, 1963.

\bibitem{L34}
J.~Leray.
\newblock Sur le mouvement d'un liquide visqueux emplissant l'espace.
\newblock {\em Acta Math.}, 63(1):193--248, 1934.

\bibitem{P60}
G.~Prodi.
\newblock Qualche risultato riguardo alle equazioni di {N}avier-{S}tokes nel
  caso bidimensionale.
\newblock {\em Rend. Sem. Mat. Univ. Padova}, 30:1--15, 1960.

\bibitem{S59}
J.~Serrin.
\newblock A note on the existence of periodic solutions of the
  {N}avier-{S}tokes equations.
\newblock {\em Arch. Rational Mech. Anal.}, 3:120--122, 1959.

\bibitem{S62}
J.~Serrin.
\newblock The initial value problem for the {N}avier-{S}tokes equations.
\newblock In {\em Nonlinear {P}roblems ({P}roc. {S}ympos., {M}adison, {W}is.,
  1962)}, pages 69--98. Univ. of Wisconsin Press, Madison, Wis., 1963.

\bibitem{T84}
R.~Temam.
\newblock {\em Navier-{S}tokes equations}.
\newblock AMS Chelsea Publishing, Providence, RI, 2001.
\newblock Theory and numerical analysis, Reprint of the 1984 edition.

\end{thebibliography}

\end{document}